\newtheorem{theorem}{Theorem}[section]
\newtheorem{lemma}[theorem]{Lemma}
\newtheorem{question}[theorem]{Question}
\newtheorem{definition}[theorem]{Definition}
\renewcommand{\setminus}{\smallsetminus}
\newcommand{\mono}{\rightarrowtail}
\newcommand{\epi}{\twoheadrightarrow}
\newcommand{\iso}{\cong}
\newcommand{\FF}{\mathfrak F}
\newcommand{\Q}{{\mathbb Q}}
\newcommand{\Z}{{\mathbb Z}}
\newcommand{\R}{{\mathbb R}}
\newcommand{\N}{{\mathbb N}}
\newcommand{\F}{{\mathbb F}}
\newcommand{\clh}{\operatorname{\scriptstyle\bf H}}
\newcommand{\cll}{\operatorname{\scriptstyle\bf L}}
\newcommand{\ho}{\clh_1}
\newcommand{\hf}{\clh\FF}
\newcommand{\lhf}{\cll\clh\FF}
\newcommand{\hof}{\ho\FF} 
\newcommand{\fpinfty}{\operatorname{FP}_\infty}
\newcommand{\fp}{\operatorname{FP}}
\newcommand{\Ext}{\operatorname{Ext}}
\newcommand{\Hom}{\operatorname{Hom}}
\newcommand{\cont}{\operatorname{\mathcal F_0}}
\newcommand{\colim}{{\displaystyle\lim_{\buildrel\longrightarrow\over\lambda}\ }}
\newcommand{\colimn}{{\displaystyle\lim_{\buildrel\longrightarrow\over n}\ }}
\newcommand{\colims}%
{{\displaystyle\lim_{\buildrel\longrightarrow\over{s\in S}}\ }}
\newcommand{\colimf}%
{{\displaystyle\lim_{\buildrel\longrightarrow\over{H\in\mathcal F}}\ }}
\newcommand{\projdim}{\operatorname{proj.\,dim}}
\newcommand{\silp}{\operatorname{silp}}
\newcommand{\Mo}{\operatorname{Mod}}
\newcommand{\HYPHEN}{\operatorname{-}}
\newcommand{\Mod}{\operatorname{\Mo\HYPHEN}}
\newcommand{\blah}{{\phantom M}}
\title[]%
{Groups with many finitary cohomology functors}
\author[P. H. Kropholler]{P. H. Kropholler}
\address{School of Mathematics and Statistics, University of Glasgow, Glasgow G12 8QQ}
\email{peter.kropholler@glasgow.ac.uk}
\subjclass[2000]{20J06, 20J05, 18G15}
\keywords{cohomology of groups, finitary functors.}
\begin{document}

\begin{abstract}
For a group $G$, we study the question of which cohomology functors commute with all small filtered colimit systems of coefficient modules. We say that the functor $H^n(G,\blah)$ is {\em finitary} when this is so and we consider the {\em finitary set} for $G$, that is, the set of natural numbers for which this holds. It is shown that for the class of groups $\lhf$ there is a dichotomy: the finitary set of such a group is either finite or cofinite. We investigate which sets of natural numbers $n$ can arise as finitary sets for suitably chosen $G$ and what restrictions are imposed by the presence of certain kinds of normal or near-normal subgroups. Although the class $\lhf$ is large, containing soluble and linear groups, being closed under extensions, subgroups, amalgamated free products, HNN-extension, there are known to be many not in $\lhf$ such as Richard Thompson's group $F$. Our theory does not extend beyond the class $\lhf$ at present and so it is an open problem whether the main conclusions of this paper hold for arbitrary groups. There is a survey of recent developments and open questions.
\end{abstract}

\maketitle

\section*{Organizational Statement}

This paper lays the foundation stones for a series of papers by the author's former student Martin Hamilton: \cite{hamilton2011,hamilton2008,hamilton2009}. As sometimes happens, this literature has not been published in the order in which it was intended to be read and for this reason I am taking the opportunity of this conference proceedings to include a survey of Hamilton's papers and a discussion of possible future directions. This survey follows and expands upon the spirit of the talk I gave at the meeting. The present work also lays the foundations for Hamilton's results in \cite{hamilton2011}. The papers \cite{hamilton2008} and \cite{hamilton2009} build on the results of the present work and \cite{hamilton2011}. 

\section{Introduction}

Groups of types $\fp$, $\fpinfty$ or $\fp_n$ have been widely explored. The properties are most often described in terms of projective resolutions. A group $G$ has type $\fp_n$ if and only if there is a projective resolution $\cdots\to P_j\to P_{j-1}\to\dots\to P_1\to P_0\epi\Z$ of the trivial module $\Z$ over the integral group ring $\Z G$ such that $P_j$ is finitely generated for $j\le n$. Type $\fp_1$ is equivalent to finite generation of the group, and for finitely presented groups, type $\fp_n$ is equivalent to the existence of an Eilenberg--Mac Lane space with finite $n$-skeleton. These properties can also be formulated in terms of cohomology functors by using the notion of a {\em finitary functor}.
A functor is said to be {\em finitary} if it preserves filtered colimits (see \S6.5 of \cite{leinster}; also \S3.18 of \cite{adamekrosicky}). For a group $G$ and a natural number $n$ we can consider whether or not the $n$th cohomology functor is finitary. For our purposes it is also useful to consider additive functors $F$ between abelian categories with the property that
$$\lim_\to F(M_\lambda)=0$$
whenever $(M_\lambda)$ is a filtered colimit system satisfying $\displaystyle\lim_\to M_\lambda=0$: we shall say that $F$ is {\em $0$-finitary} when this condition holds.
Here is a classical result of Brown \cite{Brown1975} phrased in this language. The details and similar results of Bieri and Eckmann can be found in (\cite{bieri-qmw} Theorem 1.3) and (\cite{Brown-book} VIII Theorem 4.8).

It would seem very natural to use the terminology \emph{continuous} to mean finitary and \emph{continuous at zero} to mean $0$-finitary. We have not done so because the terminology now standard in category theory reserves the use of the term continuous for functors which commute with limits while finitary refers to functors which commute with colimits. 

\begin{lemma}\label{classical}
For a group $G$ and $n\ge0$, the following are equivalent:
\begin{enumerate}
\item
$G$ is of type $\fp_n$;
\item
$H^i(G,\blah)$ is finitary for all $i<n$;
\item
$H^i(G,\blah)$ is $0$-finitary for all $i\le n$;
\end{enumerate}
\end{lemma}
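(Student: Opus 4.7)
The plan is to establish the cycle of implications $(i)\Rightarrow(ii)$, $(i)\Rightarrow(iii)$, and $(iii)\Rightarrow(i)$, from which the three equivalences follow; the remaining implication $(ii)\Rightarrow(iii)$ then follows by an analogous induction (or by appeal to the classical Bieri--Eckmann--Brown theorem that $(i)\iff(ii)$). Both easy implications start by fixing a projective resolution $P_\bullet\epi\Z$ of the trivial $\Z G$-module with $P_j$ finitely generated for $j\le n$. The key observation is that $\Hom_{\Z G}(P,\blah)$ commutes with filtered colimits whenever $P$ is a finitely generated projective; since filtered colimits are exact in the category of $\Z G$-modules, the cohomology of $\Hom_{\Z G}(P_\bullet,\blah)$ inherits this property in every degree $i$ for which $P_{i-1},P_i,P_{i+1}$ are all finitely generated, giving (ii) for $i<n$. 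For (iii), the weaker $0$-finitary condition at $i\le n$ follows because $H^i(G,M)$ is a subquotient of $\Hom_{\Z G}(P_i,M)$: whenever the filtered colimit of $(M_\lambda)$ is zero, the colimit of $\Hom_{\Z G}(P_i,M_\lambda)$ equals $\Hom_{\Z G}(P_i,0)=0$, and filtered colimits are exact on subquotients.

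The substantive direction is $(iii)\Rightarrow(i)$, which I would prove by induction on $n$. The base $n=0$ is vacuous since every group is of type $\fp_0$. For the inductive step at level $n\ge1$, (iii) at level $n$ subsumes (iii) at level $n-1$, so by induction $G$ is of type $\fp_{n-1}$; choose a partial resolution $P_{n-1}\to\cdots\to P_0\epi\Z$ with each $P_j$ finitely generated, set $N=\ker(P_{n-1}\to P_{n-2})$ (with $P_{-1}=\Z$), and let $\iota:N\hookrightarrow P_{n-1}$ be the inclusion. It suffices to show $N$ is finitely generated. Dimension shifting identifies
$$H^n(G,M)\iso\Hom_{\Z G}(N,M)/\iota^*\Hom_{\Z G}(P_{n-1},M).$$
Write $N$ as the filtered colimit of its finitely generated submodules $N_\mu$; the directed system $(N/N_\mu)$ has vanishing colimit, and the canonical projections $\pi_\mu:N\epi N/N_\mu$ determine a compatible family of cohomology classes $[\pi_\mu]\in H^n(G,N/N_\mu)$. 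The $0$-finitary hypothesis at level $n$ forces the colimit of $H^n(G,N/N_\mu)$ to vanish, so for some $\mu'$ we have $[\pi_{\mu'}]=0$, which means $\pi_{\mu'}$ extends along $\iota$ to a homomorphism $\tilde\pi:P_{n-1}\to N/N_{\mu'}$. Finite generation of $P_{n-1}$ together with surjectivity of $\pi_{\mu'}$ then forces $N/N_{\mu'}=\tilde\pi(P_{n-1})$ to be finitely generated, and combined with finite generation of $N_{\mu'}$ this yields finite generation of $N$. A finitely generated projective $P_n\epi N$ can then be attached, completing the resolution and certifying $G$ of type $\fp_n$.

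The hard step is the translation of the vanishing of the family $([\pi_\mu])$ in the colimit into the existence of the lift $\tilde\pi:P_{n-1}\to N/N_{\mu'}$, and the subsequent deduction that $N/N_{\mu'}$ is finitely generated from finite generation of $P_{n-1}$. This is the one place where both the dimension-shifting identification of $H^n(G,\blah)$ as $\Hom_{\Z G}(N,\blah)/\iota^*\Hom_{\Z G}(P_{n-1},\blah)$ and the $0$-finitary hypothesis, applied to a carefully chosen filtered system with vanishing colimit, are used in essential ways. Everything before this step—the projective-resolution arguments for the implications from (i), and the induction machinery—is routine homological bookkeeping.
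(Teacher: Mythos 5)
The paper offers no proof of this lemma: it is quoted as a classical result of Brown and Bieri--Eckmann, with pointers to Brown's 1975 paper, Theorem 1.3 of Bieri's Queen Mary notes and Theorem VIII.4.8 of Brown's book. Your reconstruction is, in substance, exactly the standard argument from those sources and the parts you write out are correct: the implications out of (i) using a resolution finitely generated through degree $n$ (with the mild caveat that for $H^i$ to be finitary one needs the comparison maps for $P_{i-1},P_i$ to be isomorphisms and for $P_{i+1}$ merely injective, which is what finite generation in degrees $\le n$ delivers for $i<n$), and the converse (iii)$\Rightarrow$(i) via the syzygy $N=\ker(P_{n-1}\to P_{n-2})$, the vanishing filtered system $(N/N_\mu)$ over finitely generated submodules, and the identification $H^n(G,\blah)\iso\Hom_{\Z G}(N,\blah)/\iota^*\Hom_{\Z G}(P_{n-1},\blah)$.

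The one thin spot is the claim that the remaining implication out of (ii) ``follows by an analogous induction.'' It does not follow by repeating your argument: hypothesis (ii) at level $n$ controls $H^i(G,\blah)$ only for $i<n$, so the vanishing-colimit trick cannot be applied in degree $n$ to show the $n$-th kernel is finitely generated. The correct analogue uses finitariness of $H^{n-1}(G,\blah)$ differently: writing $K=\mathrm{im}(P_{n-1}\to P_{n-2})$, one has the natural exact sequence $\Hom_{\Z G}(P_{n-2},M)\to\Hom_{\Z G}(K,M)\to H^{n-1}(G,M)\to 0$, and a five-lemma chase (using that $P_{n-2}$ is finitely presented, that $K$ is finitely generated so its comparison map is injective, and that $H^{n-1}(G,\blah)$ is finitary) shows $\Hom_{\Z G}(K,\blah)$ preserves filtered colimits; hence $K$ is finitely \emph{presented}, and therefore $N=\ker(P_{n-1}\epi K)$ is finitely generated. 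So the missing step rests on the characterization of finitely presented modules by finitariness of $\Hom$, not on the finite-generation criterion you used. Since you also offer the fallback of citing the classical theorem for (i)$\iff$(ii) --- which is precisely what the paper itself does for the entire lemma --- this is a gap of exposition rather than a mathematical error, but as written the ``analogous induction'' would not go through verbatim.
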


\begin{definition}
We write $\mathcal F(G)$ (resp. $\cont(G)$) for the set of positive natural numbers $n$ for which 
$H^n(G,\blah)$ is finitary (resp. $0$-finitary).
\end{definition}

\section{Main Theorems}

Our basic result concerns groups in the class $\lhf$ as described in \cite{kropholler1993} and \cite{krmi}. We write $\N^+$ for the set of natural numbers $n\ge1$.

\begin{theorem}\label{basic1}
Let $G$ be an $\lhf$-group for which $\cont(G)$ is infinite. Then
\begin{enumerate}
\item $\cont(G)$ is cofinite in $\N^+$;
\item there is a bound on the orders of the finite subgroups of $G$;
\item there is a finite dimensional model for the classifying space $\underbar EG$ for proper group actions.
\end{enumerate}
\end{theorem}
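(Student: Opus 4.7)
The plan is to exploit the characteristic feature of the class $\lhf$: every finitely generated subgroup lies in $\hf$ and hence carries an internal hierarchy built from finite groups via iterated finite-dimensional contractible actions. The three conclusions of the theorem are tightly interlinked, and I would attack them in the order (ii), (iii), (i), each feeding into the next.

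For (ii), the core observation is Shapiro's lemma: for a finite subgroup $F\le G$, one has $H^n(G,\coind_F^G M)\cong H^n(F,M)$, and the right-hand side is nonzero in cofinitely many degrees whenever $F$ is a nontrivial finite group and $M$ is chosen suitably (e.g.\ $M=\Z/|F|$). If the orders of finite subgroups of $G$ were unbounded, the plan is to assemble a filtered colimit system of coefficient modules, indexed by finite subgroups ordered by inclusion and built from such coinduced modules, whose colimit vanishes but for which $\colim H^n(G,M_\lambda)\ne 0$ for arbitrarily large $n$, contradicting the infinitude of $\cont(G)$. The delicate point is arranging the transition maps so that the module colimit vanishes while the cohomological obstruction persists in every sufficiently large degree.

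For (iii), assuming the bound on finite subgroups, I would invoke the machinery developed in \cite{kropholler1993} and \cite{krmi}: an $\lhf$-group with bounded torsion and sufficiently strong cohomological finiteness admits a finite-dimensional model for $\underbar EG$. The required cohomological finiteness should be extracted from the $0$-finitary condition in cofinitely many degrees, presumably via an induction along the $\hf$-hierarchy of each finitely generated subgroup, combined with a limit argument peculiar to the $\lhf$ class to propagate the local conclusion to $G$ itself.

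For (i), once $\underbar EG$ has finite dimension $d$, the equivariant cohomology spectral sequence expresses $H^n(G,M)$ for $n>d$ in terms of the cohomology of the finite cell stabilizers of a cocompact skeleton. Since finite groups are of type $\fpinfty$, all of their cohomology functors are finitary, and so the finite-dimensional spectral sequence forces $H^n(G,\blah)$ to be $0$-finitary for all sufficiently large $n$, proving cofiniteness of $\cont(G)$. The main obstacle I expect lies in step (ii): the $0$-finitary hypothesis is a qualitative statement about commuting with colimits, and converting it into the uniform quantitative bound on orders of finite subgroups requires a careful choice of filtered system and full exploitation of the $\lhf$ hierarchy — this is the technical heart of the argument, after which (iii) and (i) follow by comparatively standard machinery.
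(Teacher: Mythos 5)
Your outline inverts the logical structure of the paper's proof and, more importantly, leaves the two hardest steps unsupported. The paper's argument has a single pivot that your proposal never identifies: since $\cont(G)$ is infinite, the set $S=\{s:\ s+j\in\cont(G)\}$ is cofinal in $\N$, so Mislin's complete cohomology $\widehat H^j(G,M)$, defined as the colimit over $n$ of $H^{j+n}(G,\Omega^nM)$, can be computed along $S$; because the loop functor $\Omega$ is itself $0$-finitary, one may interchange colimits and conclude that \emph{every} functor $\widehat H^j(G,\blah)$ is $0$-finitary. All three conclusions then follow from Theorem \ref{main}, whose engine is the vanishing $\widehat H^*(G,B)=0$ for the module $B$ of bounded $\Z$-valued functions on $G$ --- this is where the $\lhf$ hypothesis actually enters, via hierarchy arguments in the style of Cornick--Kropholler --- yielding $\projdim B<\infty$. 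Cofiniteness of $\cont(G)$ is then immediate: for $n>\projdim B$ the functor $H^n(G,\blah)$ vanishes on free modules, hence coincides with $\widehat H^n(G,\blah)$ by Lemma \ref{mislin1994}, and the latter is $0$-finitary. The bound on torsion comes from $\widehat H^0(G,\Q)=0$, which forces $\widehat H^0(G,\Z)$ to be a torsion ring with identity, hence of finite exponent $m$, after which a classical Tate cohomology argument shows every finite subgroup has order dividing $m$.

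Measured against this, your step (ii) is a genuine gap. The ``delicate point'' you flag --- building a vanishing filtered system from coinduced modules whose cohomology colimit survives --- is the entire difficulty, and there is no evident way to carry it out: to contradict the hypothesis you would need the obstruction to survive in infinitely many degrees belonging to the \emph{given} set $\cont(G)$, which is merely assumed infinite and may be arbitrarily sparse, not just in arbitrarily large degrees. Worse, your argument for (ii) makes no use of the $\lhf$ hypothesis, while the theorem is open for general groups --- a strong indication that no such direct construction exists. Your step (i) also fails as stated: a finite-dimensional $\underline EG$ need not be cocompact, so the isotropy spectral sequence has $E_1$-terms that are infinite products of cohomologies of finite stabilizers, and infinite products do not commute with filtered colimits; finite dimension alone therefore does not yield finitariness in high degrees. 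The correct replacement for both steps is the finiteness of $\projdim B$ together with the comparison map to complete cohomology.
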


We refer the reader to \cite{krmi} for a brief explanation of the classifying space $\underbar EG$, and to L\"uck's survey article \cite{Lueck-survey} for a comprehensive account.
Our theorem shows that for any $\lhf$-group $G$ the set  
$\cont(G)$ is either finite or cofinite in the set $\N^+$ of positive natural numbers. 
It is unknown whether there exists a group $G$ outwith the class $\lhf$ for which $\cont(G)$ is a moiety (i.e. neither finite nor cofinite). Notice that groups of finite cohomological dimension all belong to $\lhf$ and have a cofinite invariant because almost all their cohomology functors vanish. On the other hand the theorem shows that $\mathcal F_0(G)$ is finite for all torsion-free $\lhf$ groups of infinite cohomological dimension. Both conditions (ii) and (iii) above are highly restrictive. However, the theorem does not give a characterization for cofiniteness of $\cont(G)$ for groups with torsion: this turns out to be a delicate question even for abelian-by-finite groups and is studied by Hamilton in the companion article \cite{hamilton2011}. Before turning to the proof of Theorem \ref{basic1} we show that $\cont(G)$ can behave in any way subject to the constraints it entails.

\begin{theorem}\label{basic2}
Given any finite or cofinite subset $S\subseteq\N^+$ there exists a group $G$ such that
\begin{enumerate}
\item $\cont(G) = S$;
\item $G$ has a finite dimensional model for the classifying space $\underline EG$.
\end{enumerate}
\end{theorem}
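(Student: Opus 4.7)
The plan is to split the argument into the cofinite and finite cases, in each case producing a group with prescribed $\cont$-set by combining suitable building blocks.

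\textbf{Cofinite case.} Write $S = \N^+ \setminus F$ with $F = \{n_1 < \cdots < n_k\}$. For each $n \geq 1$, I first exhibit a group $G_n$ satisfying $\cont(G_n) = \N^+ \setminus \{n\}$ with finite-dimensional $\underline{E}G_n$. For $n = 1$, take $G_1$ to be a free group of countably infinite rank: it is torsion-free of cohomological dimension one and not finitely generated, so Lemma~\ref{classical} gives $\cont(G_1) = \N^+ \setminus \{1\}$. For $n \geq 2$, a Bestvina--Brady or Bieri--Stallings style construction yields a torsion-free group of type $\fp_{n-1}$ but not $\fp_n$ and of cohomological dimension exactly $n$, whose $\cont$-set is therefore $\N^+ \setminus \{n\}$ by the same lemma together with the vanishing of cohomology above $\cd$. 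I then define $G = G_{n_1} * \cdots * G_{n_k}$. The standard decomposition $H^n(G, M) \cong \bigoplus_i H^n(G_{n_i}, M)$ for $n \geq 2$ shows that $H^n(G,-)$ is $0$-finitary precisely when each $H^n(G_{n_i},-)$ is, and a finite-generation check handles degree $1$. A finite-dimensional $\underline{E}G$ is assembled from the factor models in the usual free-product manner (tree of spaces over the Bass--Serre tree).

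\textbf{Finite case.} Write $S = \{m_1, \ldots, m_k\}$. By Theorem~\ref{basic1}, such a $G$ must have finite subgroups of unbounded order, so torsion is forced. My plan is to construct $G$ as an extension $1 \to A \to G \to Q \to 1$ in which $A$ is a locally finite group obtained as an infinite direct sum of finite groups (providing the unbounded torsion and forcing failure of $0$-finitaryness in a cofinite set of degrees) and $Q$ is chosen with finite vcd and good finiteness properties so that, via the Lyndon--Hochschild--Serre spectral sequence
\[
H^p(Q, H^q(A,-)) \Longrightarrow H^{p+q}(G,-),
\]
the $0$-finitary degrees of $G$ fall exactly on the elements of $S$. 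A finite-dimensional $\underline{E}G$ is then built from a finite-dimensional $\underline{E}Q$ together with the finite-subgroup structure supplied by $A$.

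\textbf{Main obstacle.} The cofinite case is essentially formal once the building blocks are in hand, since the cohomology of a free product decouples in degrees $\geq 2$ and only the degree-one coordinate needs a separate argument. The difficulty lies in the finite case: one must simultaneously arrange that the $Q$-module functor $H^q(A,-)$ fails $0$-finitaryness in precisely the required degrees and rule out that spectral-sequence differentials and extension problems accidentally restore $0$-finitaryness in unwanted total degrees or destroy it in wanted ones. Calibrating the pair $(A,Q)$ to realise the exact prescribed $S$, while keeping $\underline{E}G$ finite-dimensional, is where the serious work goes.
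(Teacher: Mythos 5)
Your cofinite case is essentially the paper's own argument: the paper takes $G$ to be the free product of groups $J_n$ ($n\notin S$) with $\cont(J_n)=\N^+\setminus\{n\}$, built exactly as you build your $G_n$ (Bieri/Bestvina--Brady groups of type $\fp_{n-1}$ but not $\fp_n$ and cohomological dimension $n$), and computes $\cont$ of the free product by the Mayer--Vietoris sequence, which is your direct-sum decomposition in degrees $\ge 2$ plus the finite-generation check in degree $1$. That half is complete and correct.

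The finite case is where your proposal has a genuine gap, and you have flagged it yourself: you never actually produce the pair $(A,Q)$, and the ``calibration'' of the Lyndon--Hochschild--Serre spectral sequence so that $0$-finitaryness fails in exactly the complement of $S$ is the entire content of the claim in that case, not a technical detail to be deferred. As written, the finite case is a plan, not a proof. Moreover the difficulty you set yourself is harder than necessary: you are trying to make a single extension realise the finite set $S$ exactly, whereas the paper decouples the two requirements. It takes \emph{one} off-the-shelf group $H_n$ of type $\fp_n$ whose $\cont$-set is finite --- Houghton's group, an extension $T\mono H_n\epi\Z^n$ with $T$ the infinite locally finite group of finitary permutations, shown by Brown to be of type $\fp_n$ but not $\fp_{n+1}$; since $H_n$ has finite subgroups of unbounded order and admits an explicit finite-dimensional $\underline EH_n$ (a $T$-tree thickened over $\R^n$), Theorem \ref{basic1}(ii) forces $\cont(H_n)$ to be finite, while type $\fp_n$ forces $\{1,\dots,n\}\subseteq\cont(H_n)$. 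Then, choosing $n$ larger than $\max S$, one trims the finite set $\cont(H_n)$ down to exactly $S$ by taking a free product with the groups $J_m$ for the finitely many $m\in\cont(H_n)\setminus S$ and applying the same Mayer--Vietoris lemma as in the cofinite case. No spectral-sequence differentials or extension problems ever arise, because the only property of the extension $T\mono H_n\epi\Z^n$ that is used is the crude dichotomy of Theorem \ref{basic1}. I would also caution that your opening inference in the finite case (``such a $G$ must have finite subgroups of unbounded order, so torsion is forced'') is not what Theorem \ref{basic1} says: that theorem gives bounded torsion as a \emph{consequence} of $\cont(G)$ being infinite, not unbounded torsion as a consequence of $\cont(G)$ being finite; what is true, and what the paper exploits, is the contrapositive --- unbounded torsion in an $\lhf$-group \emph{suffices} to force $\cont(G)$ finite.
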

Note that all groups with finite dimensional models for $\underline E$ belong to $\hof\subset\lhf$. 
There is an abundance of examples of groups satisfying various homological finiteness conditions and we can select examples easily to establish Theorem \ref{basic2}. 

\begin{lemma}\label{basic3}\ 
\begin{enumerate}
\item For each $n$ there is a group $J_n$ such that $\cont(J_n)=\N^+\setminus\{n\}$.
\item For each $n$ there is a group $H_n$ of type $\fp_n$ such that
$\cont(H_n)$ is finite.
\end{enumerate}
Moreover, the groups  here can be chosen to have finite dimensional models for their classifying spaces for proper actions.
\end{lemma}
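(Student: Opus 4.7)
\emph{Plan for (i).} I would take $J_n$ to be a torsion-free group of type $\fp_{n-1}$ that is not of type $\fp_n$ and of cohomological dimension exactly $n$. The Bieri--Stallings kernel $SB_n = \ker(F_2^n \to \Z)$, where the map sends each free generator of each factor to $1$, is a classical example: it is known to be of type $\fp_{n-1}$ but not $\fp_n$, and as a subgroup of $F_2^n$ it satisfies $\cd(SB_n) \le n$. Lemma~\ref{classical} then gives that $H^i(SB_n, \blah)$ is $0$-finitary for $1 \le i \le n-1$; the failure of $\fp_n$ combined with Lemma~\ref{classical}~(iii) forces $H^n(SB_n, \blah)$ to fail $0$-finitarity, which in particular gives $\cd(SB_n) = n$ and hence vanishing of $H^i$ for $i > n$. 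Thus $\cont(J_n) = \N^+ \setminus \{n\}$. As $SB_n$ acts freely and cocompactly on the universal cover of $(S^1 \vee S^1)^n$, the torsion-free group $J_n = SB_n$ has a finite-dimensional model for $EG = \underline E G$.

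\emph{Plan for (ii).} I would take $H_n$ to be the Houghton group on $n+1$ rays, known to be of type $\fp_n$ but not $\fp_{n+1}$. By Lemma~\ref{classical}, $\{1, \ldots, n\} \subseteq \cont(H_n)$, while the failure of $\fp_{n+1}$ gives $n+1 \notin \cont(H_n)$; in particular $\cont(H_n) \ne \N^+$. Houghton's group lies in $\lhf$ but contains the finitary symmetric group on every finite subset of its underlying set, so the orders of its finite subgroups are unbounded. Theorem~\ref{basic1}~(ii) therefore rules out $\cont(H_n)$ being cofinite, and combined with the dichotomy of Theorem~\ref{basic1}, $\cont(H_n)$ must be finite. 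Houghton's group is known to act properly cocompactly on a finite-dimensional contractible complex, which provides the required model for $\underline E G$.

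\emph{Where the work is.} Both parts rest on identifying suitable classical examples; the essential ingredient for (ii) is Theorem~\ref{basic1} itself, which forces finiteness of $\cont(H_n)$ as soon as we know it is a proper subset of $\N^+$. The underlying phenomenon is that torsion-free constructions of finite $\cd$ can localize the failure of $0$-finitarity to a single degree, whereas an $\lhf$-group with unbounded finite subgroups cannot have a cofinite $\cont$, so once any degree fails only finitely many $0$-finitary degrees remain.
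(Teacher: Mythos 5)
Your examples and logical skeleton are exactly the paper's: for (i) the Bieri--Stallings kernels $\ker(F_2^n\to\Z)$ (the paper cites these as Bieri's $A_{n-1}$ from Proposition 2.14 of the QMW notes), with Lemma \ref{classical} locating the unique failure of $0$-finitarity in degree $n$ and $\cd=n$ killing everything above; for (ii) Houghton's groups, with the contrapositive of Theorem \ref{basic1}(ii) applied to the unbounded finite subgroups of the locally finite normal subgroup $T$ forcing $\cont(H_n)$ to be finite.

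The one genuine flaw is your justification of the classifying-space clause for $H_n$. You assert that Houghton's group ``acts properly cocompactly on a finite-dimensional contractible complex.'' No such action exists: a proper cocompact action on a finite-dimensional contractible complex would make $H_n$ of type $\fpinfty$, contradicting the very failure of $\fp_{n+1}$ on which your argument rests. What you actually need --- both for the last sentence of the lemma and to place $H_n$ in $\lhf$ (indeed $\hof$) so that Theorem \ref{basic1} applies at all --- is only a finite-dimensional, necessarily non-cocompact, model for $\underline EH_n$, and this requires an argument. The paper supplies one explicitly: writing $T$ as the union of a chain of finite subgroups $T_0<T_1<\cdots$, it builds a $T$-tree on the cosets of the $T_i$ (a one-dimensional $\underline ET$) and then thickens an $n$-dimensional contractible complex on which $H_n/T\iso\Z^n$ acts freely by inserting suitably twisted copies of this tree. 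Replace the cocompactness claim with such a construction or a citation for it; the rest of your argument then matches the paper's.
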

\begin{proof}
For $J_n$ we can take Bieri's example $A_{n-1}$  of a group which is of type $\fp_{n-1}$ but not of type $\fp_n$ (\cite{bieri-qmw} Proposition 2.14). This group has cohomological dimension $n$ and hence it has the desired properties.
Many more examples like this can be obtained using the powerful results of Bestvina and Brady \cite{bestvinabrady1997}

For the groups $H_n$ we may choose Houghton's examples \cite{houghton} of groups which were shown to be of type $\fp_{n}$ but not type $\fp_{n+1}$ by Brown \cite{Brown1987}.  The group $H_n$ is defined to be the group of those permutations $\sigma$ of
$\{0,1,2,\dots,n\}\times\N$
for which there exists $m_0,\dots,m_n\in\N$ (depending on $\sigma$) such that
$\sigma(i,m)=(i,m+m_i)$
for all but finitely many ordered pairs $(i,m)$. The vector $(m_0,\dots,m_n)$ is uniquely determined by $\sigma$ and necessarily satisfies $m_0+\dots+m_n=0$. Thus there is a group homomorphism
$H_n\to\Z^{n+1}$
given by $\sigma\mapsto(m_0,\dots,m_n),$
and $H_n$ is a group extension
$$T\mono H_n\epi\Z^n$$
where $T$ is the group of all finitary permutations of the countably infinite set $\{0,1,2,\dots,n\}\times\N$. 
We will describe an explicit construction for a finite dimensional $\underline EH_n$. Let $T_0<T_1<T_2<\dots<T_i<\cdots$ be a chain of finite subgroups of the locally finite group $T$, indexed by $i\in\N$ and having $\bigcup  T_i=T$. Let $\Gamma$ be the graph whose edge and vertex sets are the cosets of the $T_i$:
$$V:=\bigsqcup T_i\backslash T=:E$$ and in which the terminal and initial vertices of an edge $e=T_ig$ are $\tau e=T_{i+1}g$ and $\iota e=T_ig$. Then $\Gamma$ is a $T$-tree and its realization as a one dimensional $CW$-complex is a one dimensional model $X$ for $\underline ET$. Now take any $H_n$-simplicial complex abstractly homeomorphic to $\R^n$ on which $T$ acts trivially and on which the induced action of $H_n/T$ is free. Then we can thicken the space $X$ by replacing each vertex by a copy of $T$ appropriately twisted by the action of $H_n$ and replacing each higher dimensional simplex of $X$ by the join of the trees placed at its vertices. This creates a finite dimension model for $\underbar EH_n$. As well as establishing (iii), this also shows that $H_n$ belongs to $\hof$ and hence Theorem \ref{basic1} applies. Since $T$ is an infinite locally finite group we see that the conclusion Theorem 2.1(ii) fails and it follows that $\cont(H_n)$ is finite as required.
\end{proof}

\begin{lemma}\label{basic4}
Suppose that $G$ is the fundamental group of a finite graph of groups in which the edge groups are of type $\fpinfty$. Then $\cont(G)=\bigcap\cont(G_v)$, the intersection of the finitary sets of vertex stabilizers $G_v$ as $v$ runs through a set of orbit representatives of vertices.
\end{lemma}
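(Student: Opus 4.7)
The plan is to invoke the Mayer--Vietoris long exact sequence associated with a graph of groups, apply it termwise to a filtered system of coefficient modules, and exploit that filtered colimits are exact in an abelian category. Because $G$ acts on its Bass--Serre tree with finite quotient graph, there is a natural long exact sequence
$$\cdots \to \bigoplus_e H^{n-1}(G_e, M) \to H^n(G, M) \to \bigoplus_v H^n(G_v, M) \to \bigoplus_e H^n(G_e, M) \to \cdots$$
in which $v$ and $e$ range over orbit representatives of vertices and edges, respectively, and both index sets are finite.

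Let $(M_\lambda)$ be a filtered colimit system of $\Z G$-modules with $\colim M_\lambda = 0$. By naturality of Mayer--Vietoris and exactness of filtered colimits, together with the fact that finite direct sums commute with $\colim$, passing to the colimit yields a long exact sequence
$$\cdots \to \bigoplus_e \colim H^{n-1}(G_e, M_\lambda) \to \colim H^n(G, M_\lambda) \to \bigoplus_v \colim H^n(G_v, M_\lambda) \to \bigoplus_e \colim H^n(G_e, M_\lambda) \to \cdots$$
Since each edge group $G_e$ is of type $\fpinfty$, Lemma \ref{classical} implies that $H^i(G_e, \blah)$ is finitary, hence also $0$-finitary, in every degree $i \ge 0$. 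Consequently every edge-group colimit appearing in the displayed sequence vanishes.

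The inclusion $\bigcap_v \cont(G_v) \subseteq \cont(G)$ now follows immediately: if $n \in \cont(G_v)$ for every $v$, then both neighbours of $\colim H^n(G, M_\lambda)$ in the long exact sequence vanish, so exactness forces $\colim H^n(G, M_\lambda) = 0$. For the reverse inclusion, if $n \in \cont(G)$ then $\colim H^n(G, M_\lambda) = 0$ while the edge term to the right also vanishes; exactness gives $\bigoplus_v \colim H^n(G_v, M_\lambda) = 0$, so each summand vanishes individually and $n \in \cont(G_v)$ for every $v$.

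There is no real obstacle in this argument; the substantive input is just the Mayer--Vietoris sequence of a finite graph of groups, which is standard Bass--Serre theory, while the $\fpinfty$ hypothesis on edge stabilizers is exactly what Lemma \ref{classical} needs to conclude that $H^i(G_e, \blah)$ is $0$-finitary in every degree. Finiteness of the graph is used only to ensure that the direct sums over vertices and edges are finite, so that they commute with the colimit on the nose.
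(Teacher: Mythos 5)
Your Mayer--Vietoris setup, the vanishing of the edge terms via Lemma \ref{classical}, and the inclusion $\bigcap_v\cont(G_v)\subseteq\cont(G)$ all match the paper's argument and are correct. But the reverse inclusion as you have written it has a genuine gap: a quantifier over the wrong module category. Membership $n\in\cont(G_v)$ requires $\colim H^n(G_v,U_\lambda)=0$ for \emph{every} vanishing filtered colimit system $(U_\lambda)$ of $\Z G_v$-modules, whereas your exact-sequence argument only shows that $\colim H^n(G_v,M_\lambda)=0$ for systems $(M_\lambda)$ of $\Z G$-modules restricted to $G_v$. A general $\Z G_v$-module system need not arise this way, so you have not ruled out a witness to $n\notin\cont(G_v)$ living purely over $\Z G_v$.

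The paper closes exactly this gap by arguing contrapositively with induction of modules: if $n\notin\bigcap\cont(G_v)$, pick $v$ and a vanishing system $(U_\lambda)$ of $\Z G_v$-modules with $\colim H^n(G_v,U_\lambda)\ne0$, and set $M_\lambda:=U_\lambda\otimes_{\Z G_v}\Z G$. This is a vanishing filtered system of $\Z G$-modules, and since $\Z G$ is free as a left $\Z G_v$-module with $\Z G_v$ as a direct summand, $U_\lambda$ is a natural $\Z G_v$-direct summand of $M_\lambda$; hence $\colim H^n(G_v,M_\lambda)\ne0$, and your isomorphism $\colim H^n(G,M_\lambda)\cong\prod_v\colim H^n(G_v,M_\lambda)$ (which follows from the vanishing of the edge terms) then forces $\colim H^n(G,M_\lambda)\ne0$, i.e.\ $n\notin\cont(G)$. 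Inserting this induction step makes your proof complete and essentially identical to the paper's.
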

\begin{proof}
The Mayer--Vietoris sequence for $G$
is a long exact sequence of the form
$$\dots\to\prod H^{n-1}(G_e,\ \ )\to H^n(G,\ \ )\to \prod H^n(G_v,\ \ )\to
\prod H^n(G_e,\ \ )\to\dots$$
Here, $e$ and $v$ run through sets of orbit representatives of edges and vertices, and since $G$ comes from a {\em finite} graph of groups, the product here are finite. Since the edge groups $G_e$ are $\fpinfty$, we find that restriction induces an isomorphism
$$\colim H^n(G,M_\lambda)\to\prod\colim H^n(G_v,M_\lambda)$$
whenever $(M_\lambda)$ is a vanishing filtered colimit system of $\Z G$-modules.
Thus if $n\notin\cont(G)$ then any system $(M_\lambda)$ witnessing this must also bear witness to a infinitary functor $H^n(G_v,\ \ )$ for some $v$, and we see that
$$\cont(G)\supseteq\bigcap\cont(G_v).$$
On the other hand, if $n\notin\bigcap\cont(G_v)$ then there is a $v$ and a vanishing filtered colimit system $(U_\lambda)$ of $\Z G_v$-modules such that
$$\colim H^n(G_v,U_\lambda)\ne0.$$ Set $M_\lambda:=U_\lambda\otimes_{\Z G_v}\Z G$. Since, {\em qua $\Z G_v$-module,} $U_\lambda$ is a natural direct summand of $M_\lambda$ we also have
$$\colim H^n(G_v,M_\lambda)\ne0$$ and therefore from the isomorphism
$$\colim H^n(G,M_\lambda)\ne0$$ and $n\notin\cont(G)$. Thus 
$\cont(G)\subseteq\bigcap\cont(G_v)$ and the result is proved.
\end{proof}

The simplest way to apply this is to a free product of finitely many groups. We deduce that the collection of subsets which can arise as $\cont(G)$ for some $G$ is closed under finite intersections. 

\begin{proof}[Proof of Theorem \ref{basic2}]
Suppose that $S$ is a cofinite subset of $\N^+$. Then we take $G$ to be the free product of the finitely many groups $J_n$, as described in Lemma \ref{basic3}, for which $n\notin S$. Lemmas \ref{basic3} and \ref{basic4} show that $\cont(G)=S$.

On the other hand, if $S$ is finite, then choose an $n\in \N^+$ greater than any element of $S$ and let $G$ be the free product of the group $H_n$ and the groups $J_m$ for $m\in S$. Then again, Lemmas \ref{basic3} and \ref{basic4} show that $\cont(G)=S$. 

That the groups constructed this way have finite dimensional models for their classifying spaces follows from the easy result below.
\end{proof}

\begin{lemma}
Let $G$ be a finite free product $K_1*\dots*K_n$ where each $K_i$ has a finite dimensional $\underline EK_i$. Then $G$ also has a finite dimensional $\underline EG$.
\end{lemma}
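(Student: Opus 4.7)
The plan is to assemble the given models using the Bass--Serre tree $T$ of the free product decomposition $G = K_1*\cdots*K_n$. The $G$-action on $T$ has trivial edge stabilizers and vertex stabilizers that are either trivial or conjugate to one of the $K_i$; by the Kurosh subgroup theorem every finite subgroup of $G$ is conjugate into some $K_i$ and so fixes at least one vertex of $T$.

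I would then build an equivariant thickening of $T$: replace each vertex with nontrivial stabilizer by a translate of the given model for $\underline EK_i$, and leave vertices with trivial stabilizer as points. Concretely, choose orbit representatives $\tilde v_1,\dots,\tilde v_n$ with $G_{\tilde v_i}=K_i$, let $X_i$ be the given finite-dimensional $\underline EK_i$, pick a basepoint $x_i\in X_i$, and take the pushout that identifies each edge endpoint of $T$ meeting $g\tilde v_i$ with $g\cdot x_i$ in $gX_i$, extending equivariantly across each $G$-orbit of edges. The resulting $G$-CW complex $X$ has dimension $\max(1,\max_i\dim X_i)$ and all cell stabilizers are finite (in fact either trivial or subgroups of some conjugate of a $K_i$). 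This mirrors the thickening construction already used in Lemma \ref{basic3} for Houghton's groups.

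To verify that $X$ is a model for $\underline EG$ it suffices to show that $X^H$ is contractible for every finite subgroup $H\le G$ (the case $H=1$ yielding ordinary contractibility). For any such $H$, the fixed set $T^H$ is a nonempty subtree of $T$; at each vertex $v\in T^H$ the local piece of $X$ is either a point or a translate of some $X_i$ whose $H$-fixed subspace is contractible by hypothesis. The space $X^H$ is the union of these contractible subcomplexes glued along the edges of $T^H$, i.e.\ a tree of contractible pieces, hence contractible.

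The only real obstacle is the bookkeeping for the equivariant attaching maps — one must check that the trivial-stabilizer edge orbits $G/1\times S^0$ can be attached to the $G$-orbits of the chosen points $g\cdot x_i$ consistently and $G$-equivariantly. Because the edge stabilizers are trivial this reduces to choosing one basepoint $x_i\in X_i$ per factor and then propagating by the $G$-action, after which the contractibility of each $X^H$ is the standard "tree of contractible spaces" argument and presents no further difficulty.
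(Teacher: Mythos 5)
Your proposal is correct and follows essentially the same route as the paper, which simply takes the Bass--Serre tree of the free product (vertex stabilizers the conjugates of the $K_i$, free edge action) and implicitly thickens its vertices by the given models, exactly as in the construction for Houghton's groups earlier in the paper. Your write-up in fact supplies the fixed-point verification that the paper's one-line proof leaves implicit.
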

\begin{proof}
Choose a $G$-tree $T$ whose vertex set $V$ is the disjoint union of the $G$-sets
$$K_i\backslash G:=\{K_ig:\ g\in G\}$$
and so that $G$ acts freely on the edge set $E$. 
\end{proof}

In order to prove Theorem \ref{basic1} we shall make use of complete cohomology: we shall use Mislin's definition in terms of satellite functors. Let $M$ be a $\Z G$-module. We write $FM$ for the free module on the underlying set of non-zero elements of $M$. The inclusion $$M\setminus\{0\}\to M$$ induces a natural surjection $$FM\to M$$ whose kernel is written $\Omega M$. Both $F$ and $\Omega$ are functorial: for a map $\theta:M\to N$, the induced map $F\theta:FM\to FN$ carries elements $m\in M\setminus\ker\theta$ to their images $\theta m\in N$ and carries elements of $\ker\theta\setminus\{0\}$ to $0$. The functor $F$ is left adjoint to the forgetful functor from $\Z G$-modules to pointed sets which forgets everything save the set and zero. The advantage of working with $F$ rather than simply using the free module on the underlying set of $M$ is that it is $0$-finitary. Our functor $\Omega$ inherits this property: it is also $0$-finitary. We shall make use of these observations in proving Theorem \ref{basic1}. As in \cite{mislin1994} the $j$th complete cohomology of $G$ is given by the colimit:
$$\widehat H^j(G,M):=\colimn H^{j+n}(G,\Omega^nM).$$
\begin{lemma}\label{mislin1994}
If there is an $m$ such that $H^j(G,F)=0$ for all free modules $F$ and all $j\ge m$ then the natural map
$$H^j(G,\blah)\to\widehat H^j(G,\blah)$$ is an isomorphism for all $j\ge m+1$.
\end{lemma}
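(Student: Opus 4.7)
The plan is to analyse the transition maps of the directed system defining $\widehat H^j(G,M)$ and show that, under the stated hypothesis, they all become isomorphisms once $j\ge m+1$. The key short exact sequence is the canonical one
$$0\to\Omega^{n+1}M\to F\Omega^nM\to\Omega^nM\to0,$$
which is what produces the $n$th structure map in the colimit system: applying cohomology gives a long exact sequence, and the structure map $H^{j+n}(G,\Omega^nM)\to H^{j+n+1}(G,\Omega^{n+1}M)$ is the connecting homomorphism appearing in the four-term piece
$$H^{j+n}(G,F\Omega^nM)\to H^{j+n}(G,\Omega^nM)\to H^{j+n+1}(G,\Omega^{n+1}M)\to H^{j+n+1}(G,F\Omega^nM).$$

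Next I would observe that $F\Omega^nM$ is by construction a free $\Z G$-module for every $n\ge 0$, so the hypothesis applies to it. Provided $j\ge m+1$, both indices $j+n$ and $j+n+1$ satisfy $j+n\ge m+1>m$ and $j+n+1\ge m+1>m$ for every $n\ge0$, so the two outer terms of the displayed sequence vanish. By exactness the connecting homomorphism is therefore an isomorphism for every $n\ge0$.

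Finally, since every transition map in the directed system
$$H^j(G,M)\to H^{j+1}(G,\Omega M)\to H^{j+2}(G,\Omega^2M)\to\cdots$$
is an isomorphism, the colimit coincides with its initial term, so the canonical map from the $n=0$ stage into the colimit, which is by definition the natural map $H^j(G,M)\to\widehat H^j(G,M)$, is an isomorphism.

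Honestly there is no real obstacle here: the argument is essentially a dimension-shifting exercise combined with the observation that Mislin's satellite construction uses the free module $F\Omega^nM$ at every step, so the vanishing hypothesis plugs in directly. The only point that merits a sentence of comment is the identification of the structure map of the colimit with the connecting homomorphism, which is built into the very definition of $\widehat H$ via satellites.
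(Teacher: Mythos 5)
Your argument is correct and is essentially identical to the paper's proof: both identify the transition maps of Mislin's colimit system as connecting homomorphisms for the short exact sequence $\Omega^{n+1}M\mono F\Omega^nM\epi\Omega^nM$, note that the flanking terms involve the free module $F\Omega^nM$ and hence vanish for $j\ge m+1$, and conclude that every transition map is an isomorphism. Nothing further is needed.
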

\begin{proof}
The connecting maps
$H^{j+n}(G,\Omega^nM)\to H^{j+n+1}(G,\Omega^{n+1}M)$
in the colimit system defining complete cohomology are all isomorphisms because they fit into the cohomology exact sequence with
$H^{j+n}(G,F\Omega^nM)$ and
$H^{j+n+1}(G,F\Omega^{n}M)$
to the left and the right, and these both vanish for $j\ge m+1$.
\end{proof}

\begin{theorem}\label{main}
Let $G$ be an $\lhf$-group for which the complete cohomology functors $\widehat H^j(G,\ \ )$  are $0$-finitary for all $j$. Then
\begin{enumerate}
\item The set $B$ of bounded $\Z$-valued functions on $G$ has finite projective dimension.
\item If $M$ is a $\Z G$-module whose restriction to every finite subgroup is projective then $M$ has finite projective dimension: in fact
$$\projdim M\le\projdim B.$$
\item For all $n>\projdim B$, $H^n(G,\ \ )$ vanishes on free modules.
\item For all $n>\projdim B$, the natural map $H^n(G,\ \ )\to\widehat H^n(G,\ \ )$ is an isomorphism.
\item $n\in\cont(G)$ for all $n>\projdim B$.
\item $G$ has rational cohomological dimension $\le\projdim B+1$.
\item There is a bound on the orders of the finite subgroups of $G$.
\item There is a finite dimensional model for $\underline EG$. 
\end{enumerate}
\end{theorem}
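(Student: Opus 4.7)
The plan is to prove (i) first via the structural theory of $\lhf$-groups and then derive (ii)--(viii) in cascade. The two properties of $B$ that drive everything are: $B|_F$ is projective over $\Z F$ for every finite subgroup $F\le G$ (since the $F$-orbits on $G$ are finite, $B$ decomposes there as a product of regular $\Z F$-modules), and $B\otimes_\Z(\_)$ with diagonal $G$-action preserves the property of being projective on restriction to every finite subgroup.

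For (i), the strategy is transfinite induction through the Kropholler hierarchy $\lhf=\cll\clh\FF$ of \cite{kropholler1993}. At the base $G\in\FF$, $B=\Z G$ is free and the result is trivial. At a successor level $\clh_{\alpha+1}\FF$, $G$ acts on a finite-dimensional contractible complex whose cell stabilizers lie at the previous level; the equivariant cellular chain complex, tensored with $B$, yields a finite-length resolution whose terms are induced modules controlled by the inductive hypothesis. The outer $\cll$-step and the limit ordinals in $\clh$ are where the standing hypothesis that $\widehat H^*(G,\_)$ is $0$-finitary becomes indispensable rather than mere vanishing, since it kills precisely the filtered colimit systems arising when $G$ is an ascending union of subgroups from lower levels. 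The main obstacle is keeping the bound on $\projdim B$ uniform through the hierarchy rather than merely finite at each level.

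With (i) in hand, (ii) follows by dimension shifting through the short exact sequence
$$0\to M\to B\otimes_\Z M\to(B/\Z)\otimes_\Z M\to 0$$
with diagonal action, iterated so as to exhibit $M$ as an iterated syzygy of modules whose $\Z G$-projective dimensions are bounded by $\projdim B$; the key input is the restriction property of $B$, which is inherited by all intermediate terms. Then (iii) is immediate since free $\Z G$-modules restrict to projective modules over any subgroup; (iv) is Lemma \ref{mislin1994} applied with $m=\projdim B$; and (v) follows by combining (iv) with the standing $0$-finitary hypothesis on $\widehat H^n$.

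For (vi), I would apply the $\Q$-coefficient analogue of (ii): every $\Q G$-module restricts to a projective $\Q F$-module since $|F|$ is invertible in $\Q$, yielding $\cd_\Q G\le\projdim B+1$. For (vii), the bound on orders of finite subgroups is obtained by contradiction from (ii): were there finite subgroups of arbitrarily large order, Shapiro's lemma would supply a $\Z G$-module, projective on every finite subgroup, with non-vanishing cohomology in unbounded degree, violating finiteness of $\projdim B$. Given (vii) together with (vi), the finite-dimensional model $\underline EG$ asserted in (viii) is then furnished by the standard Kropholler--Mislin construction of \cite{krmi}.
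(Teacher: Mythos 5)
There are genuine gaps, and the most serious one is in your proof of (i). Your transfinite induction runs over the position of the group itself in the hierarchy, but the standing hypothesis --- that the functors $\widehat H^j(G,\blah)$ are $0$-finitary --- is a hypothesis on $G$ and is not inherited by the cell stabilizers appearing at a successor stage, nor by the finitely generated subgroups appearing at the $\cll$-step; so the inductive hypothesis simply cannot be invoked for those subgroups. Moreover the module being resolved changes with the group: the stabilizer terms of $B\otimes C_*$ are induced from $B|_{G_\sigma}$ (bounded functions on $G$ restricted to $G_\sigma$), not from the bounded functions on $G_\sigma$ that your inductive statement would control. The uniform-bound problem you flag at limit ordinals and at the $\cll$-step is real and you offer no mechanism to resolve it. The paper avoids all of this by fixing $G$ and inducting over its $\lhf$-\emph{subgroups} $H$ (Lemma \ref{new}, after \cite{cokr2}) to prove the vanishing $\widehat H^j(G,B)=0$ for all $j$; it then uses the ring structure on $B$ to get $\widehat\Ext^0_{\Z G}(B,B)=0$ and invokes the general criterion that a module $N$ has finite projective dimension if and only if $\widehat\Ext^0_{\Z G}(N,N)=0$. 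This complete-cohomology detour is the actual engine of (i), and nothing in your sketch replaces it.

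Two further steps do not go through as written. For (iii), bounding the \emph{projective} dimension of free modules (which are already projective) says nothing about $H^n(G,F)=\Ext^n_{\Z G}(\Z,F)$; what is needed is a bound of injective type, and the paper gets it from the inequality $\silp(\Z G)\le\kappa(\Z G)$ (Theorem C of \cite{cokr1}), a nontrivial input which your ``immediate'' cannot substitute for. For (vii), your contradiction via Shapiro's lemma again conflates projective-dimension bounds with cohomology vanishing; the paper's route is to deduce $\widehat H^0(G,\Q)=0$ from (ii) applied to $\Q$, use the $0$-finitary hypothesis to conclude that the ring $\widehat H^0(G,\Z)$ is torsion and hence of finite exponent $m$, and then use classical Tate cohomology of finite subgroups to see that their orders divide $m$. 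Your treatment of (ii), (iv), (v) and (vi) is essentially in the spirit of the paper (dimension shifting through $M\to B\otimes M\to\overline B\otimes M$, Lemma \ref{mislin1994}, and the observation that $\Q$ restricted to a finite subgroup has projective dimension one), though even for (ii) the paper needs a two-stage argument --- transfinite induction over the $\hf$-hierarchy followed by Benson's induction on cardinality --- to pass from $\hf$-subgroups to all of $G$.
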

\begin{proof}[Outline of the proof.] Since $\widehat H^j(G,\ \ )$ is finitary and $G$ belongs to the class $\lhf$ we have the following algebraic result about the cohomology of $G$:
\begin{center}
$\widehat H^j(G, B)=0$ for all $j$.
\end{center}
For $\hf$-groups of type $\fpinfty$ this follows from (\cite{cokr2}, Proposition 9.2) by taking the ring $R$ to be $\Z G$ and taking the module $M$ to be the trivial $\Z G$-module $\Z$. However we need to strengthen this result in two ways. Firstly we wish to replace the assumption that $G$ is of type $\fpinfty$ by the weaker condition that the functors $\widehat H^j(G,\ \ )$  are $0$-finitary for all $j$. This presents no difficulty because the proofs in \cite{cokr2} depend solely on calculations of complete cohomology rather than ordinary cohomology. The second problem is also easy to address but we need to take care. Groups of type $\fpinfty$ are finitely generated and so $\lhf$-groups of type $\fpinfty$ necessarily belong to $\hf$. However the weaker condition that the complete cohomology is finitary does not imply finite generation: for example, all groups of finite cohomological dimension have vanishing complete cohomology and there exists such groups of arbitrary cardinality. A priori we do not know that $G$ belongs to $\hf$ and we must reprove the result that
$$\widehat H^*(G,B)=0$$ from scratch. The key, which has been established \cite{brianmatthews} by Matthews, is as follows:

\begin{lemma}\label{new}
Let $G$ be an group for which all the functors $\widehat H^j(G,\blah)$ are $0$-finitary. Let $M$ be a $\Z G$-module whose restriction to every finite subgroup of $G$ is projective. Then
$$\widehat H^j(G,M\otimes_{\Z H}\Z G)=0$$ for all $j$ and all $\lhf$-subgroups $H$ of $G$.
\end{lemma}
\begin{proof}[Proof of Lemma \ref{new}.]
If $H$ is an $\hf$-group then this can be proved by induction on the ordinal height of $H$ in the $\hf$-hierarchy. The proof proceeds in exactly the same way as the proof of the Vanishing Theorem (\cite{cokr2}, \S8). 

In general, suppose that $H$ is an $\lhf$-group. 
Let $(H_\lambda)$ be the family of finitely generated subgroups of $H$.
Then we may view $H$ as the filtered colimit $\displaystyle H=\lim_\to H_\lambda$.
\end{proof}

Now suppose that $G$ is as in the statement of Theorem \ref{main}. Lemma \ref{new} shows that
$$\widehat H^0(G, B)=0.$$
and using the ring structure on $B$ it follows that
$$\widehat\Ext^0_{\Z G}(B,B)=0.$$
This implies that $B$ has finite projective dimension: it is a general fact that a module $M$ has finite projective dimension if and only if $\widehat\Ext^0_{\Z G}(M,M)=0$. Like the coinduced module, the module $B$ contains a copy of the trivial module $\Z$ in the form of the constant functions. Thus Theorem \ref{main} (i) is established.

Let $M$ be a module satisfying the hypotheses of (ii). By (i) we know that $M\otimes B$ has finite projective dimension and the proof that $M$ is projective requires two steps. First we show that $M$ is projective over $\Z H$ for all $\hf$-subgroups $H$ of $G$. The argument here is essentially the same as that used to prove Theorem B of \cite{cokr2}, using transfinite induction on the least ordinal $\alpha$ such that $H$ belongs to $\clh_\alpha\FF$. In the inductive step one considers an action of $H$ on a contractible finite dimensional complex $X$ so that all isotropy groups belong to $\clh_\beta\FF$ with $\beta<\alpha$. If $C_*\epi\Z$ denotes the augmented (reduced) cellular chain complex of $X$ then the inductive hypothesis shows that $M\otimes C_*\epi M$ is a projective resolution of $M$ of finite length and hence $M$ has finite projective dimension. Let $\overline B$ denote that quotient $B/\Z$ of $B$ by the constant functions. Then $M\otimes\underbrace{\overline B\otimes\dots\otimes\overline B}_k$ also has finite projective resolution for any $k\ge0$. Since $M$ arises as a $k$th kernel in a projective resolution of 
$M\otimes\underbrace{\overline B\otimes\dots\otimes\overline B}_k$
it follows that $M$ itself is projective of $\Z H$.

The $\hf$-subgroups of $G$ account for all countable subgroups. The next step is to establish by induction on the cardinality $\kappa$ that $M$ is projective on restriction to all subgroups of $G$ of cardinality $\kappa$. 
This argument can be found in the work \cite{benson1997} of Benson. In this way (ii) is established.

Part (iii) follows from 
the inequality
$$\silp(\Z G)\le\kappa(\Z G)$$ as stated in
Theorem C of \cite{cokr1}. Note that although (\cite{cokr1}, Theorem C) is stated for $\hf$-groups, the given proof shows that the above inequality holds for arbitrary groups.

Lemma \ref{mislin1994} yields (iv).

We are assuming that the complete cohomology is $0$-finitary in all dimensions. Now we also know that the ordinary cohomology coincides with the complete cohomology in high dimensions. Hence (v) is established. 

The trivial module $\Q$ is an instance of a module whose restriction to every finite subgroup has finite projective dimension, (projective dimension one in fact). Therefore the dimensional finiteness conditions imply (vi). This means in particular that 
$$\widehat H^0(G,\Q)=0.$$
Since the complete cohomology is $0$-finitary we can deduce that $\widehat H^0(G,\Z)$ is torsion. Being a ring with a one, it therefore has finite exponent, say m, and a simple argument with classical Tate cohomology shows that the orders of the finite subgroups of $G$ must divide $m$ and thus (vii) is established. The argument for proving (viii) can be found in \cite{krmi}. Although the Theorem as stated there does not directly apply to our situation, a reading of the proof will reveal that the all the essentials to make the construction work are already contained in the conclusions (i)--(vii).
\end{proof}

\begin{proof}[Proof of Theorem \ref{basic1}]
We first show that the complete cohomology of $G$ is $0$-finitary in all dimensions. 
Recall that the $j$th complete cohomology of $G$ is the colimit:
$$\widehat H^j(G,M):=\colimn H^{j+n}(G,\Omega^nM).$$
The maps $H^{j+n}(G,\Omega^nM)\to H^{j+n+1}(G,\Omega^{n+1}M)$ in this system are the connecting maps in the long exact sequence of cohomology which comes from the short exact sequence
$$\Omega^{n+1}M\mono F\Omega^n M\epi \Omega^nM.$$ Let $S=\{s\in\N:\ s+j\in\cont(G)\}$. Since $S$ is infinite, it is cofinal in $\N$. Hence
$$\widehat H^j(G,M):=\colims H^{j+s}(G,\Omega^sM).$$ Now, for any vanishing filtered colimit system $(M_\lambda)$ of $\Z G$-modules we have
\begin{eqnarray*}
\colim \widehat H^j(G,M_\lambda)&=&\colim\colims H^{j+s}(G,\Omega^sM_\lambda)\\
&=&\colims\colim H^{j+s}(G,\Omega^sM_\lambda)\\
&=&\colims H^{j+s}(G,\colim\Omega^sM_\lambda)\\
&=&0.
\end{eqnarray*}
Theorem \ref{basic1} now follows from Theorem \ref{main}.
\end{proof}

\section{General behaviour of finitary cohomology functors}

In this section we show how the finitary properties of one cohomology functor can influence neighbouring functors. Our arguments are based on an unpublished observation of Robert Snider. The first gives a further insight into the nature of the finite-cofinite dichotomy for the set $\mathcal F_0(G)$. It is a property held by many groups $G$ including all $\hof$-groups that $H^n(G,\blah)$ vanishes on free modules for all sufficiently large $n$. When this is so, there is a very simple proof that the finitary set is either finite or cofinite: it is a corollary of the following.

\begin{lemma} 
Let $n$ be a positive integer.
Suppose that $G$ is a group such that
\begin{enumerate}
\item $H^{n-1}(G,\blah)$ vanishes on all projective $\Z G$-modules, and
\item $H^n(G,\blah)$ is $0$-finitary.
\end{enumerate}
Then $H^{n-1}(G,\blah)$ is $0$-finitary.
\end{lemma}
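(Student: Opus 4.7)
The plan is to exploit the short exact sequence $\Omega M\mono FM\epi M$, which is natural in $M$, together with the hypothesis that $H^{n-1}$ kills free (hence projective) modules and the fact, noted earlier in the paper, that the functor $\Omega$ is itself $0$-finitary.

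First, I would write down the tail of the long exact sequence in cohomology associated to $\Omega M\mono FM\epi M$:
$$H^{n-1}(G,FM)\to H^{n-1}(G,M)\to H^n(G,\Omega M)\to H^n(G,FM).$$
Since $FM$ is free and hypothesis (i) says $H^{n-1}(G,\blah)$ vanishes on all projective modules, the leftmost term is zero. This gives a natural transformation $\delta_M\colon H^{n-1}(G,M)\mono H^n(G,\Omega M)$ which is a monomorphism for every $M$. Naturality here is the standard naturality of connecting homomorphisms applied to the functorial assignment $M\mapsto(\Omega M\mono FM\epi M)$.

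Next, given a vanishing filtered colimit system $(M_\lambda)$, I would apply $\Omega$ to get $(\Omega M_\lambda)$. Because $\Omega$ is $0$-finitary (as recalled in the paragraph preceding Lemma \ref{mislin1994}), we have $\displaystyle\lim_\to\Omega M_\lambda=0$. Hypothesis (ii), that $H^n(G,\blah)$ is $0$-finitary, then gives
$$\colim H^n(G,\Omega M_\lambda)=0.$$

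Finally, I would assemble the $\delta_{M_\lambda}$ into a monomorphism of filtered colimit systems. Since filtered colimits of abelian groups are exact, passing to the colimit preserves the monomorphism, yielding
$$\colim H^{n-1}(G,M_\lambda)\hookrightarrow\colim H^n(G,\Omega M_\lambda)=0.$$
Hence $\colim H^{n-1}(G,M_\lambda)=0$ whenever $\colim M_\lambda=0$, which is precisely the statement that $H^{n-1}(G,\blah)$ is $0$-finitary.

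The only step that requires any care is the second one: one must verify that $\delta_M$ really is natural in $M$, so that $(\delta_{M_\lambda})$ defines a map of colimit systems. This follows from the functoriality of the assignment $M\mapsto(\Omega M\mono FM\epi M)$ used to build complete cohomology, together with the naturality of the connecting homomorphism in the cohomology long exact sequence. There is no genuine obstacle here; the argument is a direct application of the two hypotheses via dimension shifting.
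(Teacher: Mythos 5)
Your argument is correct and is essentially the paper's own proof: both use the short exact sequence $\Omega M_\lambda\mono FM_\lambda\epi M_\lambda$, kill the $H^{n-1}(G,FM_\lambda)$ term by hypothesis (i), and kill $\colim H^n(G,\Omega M_\lambda)$ by hypothesis (ii) together with the $0$-finitariness of $\Omega$. Your extra care over the naturality of the connecting map and the exactness of filtered colimits simply makes explicit what the paper leaves implicit.
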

\begin{proof}
Let $F$ and $\Omega$ denote the free module and loop functors described in the proof of Theorem \ref{basic1}. Let $(M_\lambda)$ be a vanishing filtered colimit system of $\Z G$-modules. From the short exact sequence
$$\Omega M_\lambda\to FM_\lambda\to M_\lambda$$
we obtain the long exact sequence
$$\dots\to H^{n-1}(G,FM_\lambda)\to H^{n-1}(G,M_\lambda)\to H^n(G,\Omega M_\lambda)\to\dots$$
Here the left hand group vanishes by hypothesis (i) and the right hand system vanishes on passage to colimit by hypothesis (ii). Hence
$$\colim H^{n-1}(G,M_\lambda)=0$$ as required.
\end{proof}

Thus, if $G$ is a group for which the set 
$$\{n:\ H^n(G,F)\textrm{ is non-zero for some free module }F\}$$ is bounded while the finitary set $\cont(G)$ is unbounded, then the finitary set is cofinite.

We have seen that any finite or cofinite set can be realized as the $0$-finitary set of some group. It is interesting to note that the existence of certain normal or near normal subgroups will impose some restrictions. The next lemma provides a way of seeing this.

\begin{lemma}\label{flat extension1}
Let $G$ be a group and suppose that there is an overring $R\supset \Z G$ such that $R$ is flat over $\Z G$ and $\Z\otimes_{\Z G}R=0$. Let $n$ be a positive integer. If both $H^{n-1}(G,\blah)$ and $H^{n+1}(G,\blah)$ are $0$-finitary then $H^{n}(G,\blah)$ is also $0$-finitary.
\end{lemma}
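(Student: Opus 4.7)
The plan is to exploit the flat overring $R$ to obtain two complementary dimension shifts---one from the loop module $\Omega M$, one from the free cover $FM$---that together let the $H^{n-1}$ and $H^{n+1}$ hypotheses meet in degree $n$. The preliminary fact I would establish is the vanishing $H^j(G,R\otimes_{\Z G}M)=0$ for every $j\ge 0$ and every $\Z G$-module $M$. Since $R$ is flat, the extension-of-scalars functor $R\otimes_{\Z G}-$ is exact, so the restriction functor from $R$-modules to $\Z G$-modules preserves injectives, and any injective $R$-resolution of $R\otimes_{\Z G}M$ doubles as an injective $\Z G$-resolution. The adjoint isomorphism $\Hom_{\Z G}(\Z,-)\iso\Hom_R(R\otimes_{\Z G}\Z,-)$ on $R$-modules, combined with the hypothesis $R\otimes_{\Z G}\Z=0$, annihilates every $\Hom$ group in this resolution and hence all of $H^\ast(G,R\otimes_{\Z G}M)$.

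Write $K=R/\Z G$. I would then observe that $\operatorname{Tor}_1^{\Z G}(K,\Omega M)=0$ for every $M$: the defining sequence $0\to\Z G\to R\to K\to 0$ shows that $K$ has flat dimension at most one, so $\operatorname{Tor}_2^{\Z G}(K,M)=0$, and the Tor long exact sequence of $0\to\Omega M\to FM\to M\to 0$ sandwiches $\operatorname{Tor}_1^{\Z G}(K,\Omega M)$ between $\operatorname{Tor}_2^{\Z G}(K,M)=0$ and $\operatorname{Tor}_1^{\Z G}(K,FM)=0$. The analogous vanishing is automatic for $FM$ itself, since $FM$ is flat. Tensoring $0\to\Z G\to R\to K\to 0$ with $\Omega M$ therefore yields the genuine short exact sequence
\[
0\to\Omega M\to R\otimes_{\Z G}\Omega M\to K\otimes_{\Z G}\Omega M\to 0,
\]
and combined with the cohomological vanishing of the preceding paragraph this supplies the two dimension shifts $H^j(G,\Omega M)\iso H^{j-1}(G,K\otimes_{\Z G}\Omega M)$ and $H^j(G,FM)\iso H^{j-1}(G,K\otimes_{\Z G}FM)$ for every $j\ge 1$.

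The proof then concludes by applying $H^\ast(G,-)$ to $0\to\Omega M\to FM\to M\to 0$ and substituting both shifts. For a vanishing filtered colimit system $(M_\lambda)$, the derived systems $(FM_\lambda)$, $(\Omega M_\lambda)$, $(K\otimes_{\Z G}\Omega M_\lambda)$ and $(K\otimes_{\Z G}FM_\lambda)$ are all vanishing, since $F$, $\Omega$, and $K\otimes_{\Z G}-$ preserve filtered colimits. Passing to the colimit, every term adjacent to $\colim H^n(G,M_\lambda)$ in the long exact sequence is either a colimit of $H^{n-1}(G,-)$ or $H^{n+1}(G,-)$ on one of these vanishing systems, or becomes such via a single dimension shift: $\colim H^n(G,FM_\lambda)\iso\colim H^{n-1}(G,K\otimes_{\Z G}FM_\lambda)$ is handled by the $H^{n-1}$ hypothesis, and $\colim H^n(G,K\otimes_{\Z G}\Omega M_\lambda)\iso\colim H^{n+1}(G,\Omega M_\lambda)$ by the $H^{n+1}$ hypothesis. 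All these colimits vanish, forcing $\colim H^n(G,M_\lambda)=0$. The step requiring most care is the Tor-vanishing $\operatorname{Tor}_1^{\Z G}(K,\Omega M)=0$, together with the bookkeeping that tracks both shifts through the long exact sequence in parallel; once those are in place the remainder is a routine diagram chase.
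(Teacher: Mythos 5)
Your argument is correct and follows essentially the same route as the paper: the long exact sequence of $\Omega M\to FM\to M$ together with the $0$-finitariness of $H^{n+1}$ reduces everything to $\colim H^n(G,FM_\lambda)$, which is then killed by embedding the free module $FM_\lambda$ into $R\otimes_{\Z G}FM_\lambda$ (whose cohomology vanishes because $R$ is flat and $\Z\otimes_{\Z G}R=0$) and applying the $H^{n-1}$ hypothesis to the cokernel --- exactly the paper's two dimension shifts. The only differences are cosmetic: the paper establishes the key vanishing $H^*(G,F\otimes_{\Z G}R)=0$ just for free modules $F$, via the split-exact complex $\Hom_R(P_*\otimes_{\Z G}R,F\otimes_{\Z G}R)$ rather than for all $M$ via injective resolutions, and your $\operatorname{Tor}_1^{\Z G}(K,\Omega M)$ computation is never actually needed, since the term $H^{n+1}(G,\Omega M_\lambda)$ is disposed of directly by the hypothesis without any shift (only the shift on $FM_\lambda$, where injectivity of $FM_\lambda\to R\otimes_{\Z G}FM_\lambda$ is automatic from freeness, does real work).
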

\begin{proof}
Let $F$ and $\Omega$ denote the free module and loop functors described in the proof of Theorem \ref{basic1}. Let $(M_\lambda)$ be a vanishing filtered colimit system of $\Z G$-modules. Then we have a short exact of vanishing filtered colimit systems:
$$\Omega M_\lambda\to FM_\lambda\to M_\lambda.$$
Applying the long exact sequence of cohomology and taking colimits we obtain the exact sequence
$$\colim H^n(G,FM_\lambda)\to\colim H^n(G,M_\lambda)\to\colim H^{n+1}(G,\Omega M_\lambda).$$
Here we wish to prove that the central group is zero and we know that the right hand term is zero because $H^{n+1}(G,\blah)$ is $0$-finitary. Therefore it suffices to prove that the left hand group is zero.
Since the $FM_\lambda$ are free we have the short exact sequence
$$FM_\lambda\to FM_\lambda\otimes_{\Z G}R\to (FM_\lambda\otimes_{\Z G}R)/FM_\lambda$$ of vanishing filtered colimit systems and hence we obtain an exact sequence
{\small $$\colim H^{n-1}(G,(FM_\lambda\otimes_{\Z G}R)/FM_\lambda)
\to \colim H^n(G,FM_\lambda)\to \colim H^n(G,FM_\lambda\otimes_{\Z G}R).$$}
We need to prove that the central group here is zero and we know that the left hand group vanishes because $H^{n-1}(G,\blah)$ is $0$-finitary. Therefore it suffices to prove that the right hand group is zero. In fact it vanishes even before taking colimits: let $F$ be any free $\Z G$-module and let $P_*\epi\Z$ be a projective resolution of $\Z$ over $\Z G$. Then
$\Hom_{\Z G}(P_*,F\otimes_{\Z G}R)\iso\Hom_R(P_*\otimes_{\Z G}R,F\otimes_{\Z G}R)$ is split exact because $R$ is flat over $\Z G$ and $\Z\otimes_{\Z G}R=0$.
Thus $H^*(G,F\otimes_{\Z G}R)=0$.
\end{proof}

For example, if $G$ is a group with a non-trivial torsion-free abelian normal subgroup $A$ then the lemma can be applied by taking $R$ to be the localization $\Z G(\Z A\setminus\{0\})^{-1}$ and shows that for such groups there cannot be isolated members in the complement of the finitary set $\cont(G)$. 
The condition that $A$ is normal can be weakened and yet it can still be possible to draw similar conclusions. We conclude this paper with two further results showing how this can happen.

Two subgroups $H$ and $K$ of a group $G$ are said to be commensurable if and only if $H\cap K$ has finite index in both $H$ and $K$. We write $Comm_G(H)$ for the set $\{g\in G:\ H\textrm{ and }H^g\textrm{ are commensurable}\}$. This is a subgroup of $G$ containing the normalizer of $H$.

\begin{lemma}
Let $G$ be a group with a subgroup $H$ such that $Comm_G(H)=G$ and $\Z H$ is a prime Goldie ring. Then the set $\Lambda$ of non-zero divisors in $\Z H$ is a right Ore set in $\Z G$. Moreover, if $H$ is non-trivial, then the localization $R:=\Z G\Lambda^{-1}$ satisfies the hypotheses of Lemma \ref{flat extension1}.
\end{lemma}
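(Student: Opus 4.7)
The assertion breaks into three parts: (a) $\Lambda$ is a right Ore set in $\Z G$; (b) the resulting localisation $R := \Z G\Lambda^{-1}$ is flat over $\Z G$; and (c) $\Z \otimes_{\Z G} R = 0$ when $H \ne 1$. Part (b) is automatic from the formal theory of right Ore localisation, so we concentrate on (a) and (c).

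For (c), primeness of $\Z H$ entails that $H$ has no non-trivial finite normal subgroup, so the hypothesis $H \ne 1$ forces $H$ to be infinite. Since finite-support elements of $\Z H$ cannot be right $H$-invariant when $H$ is infinite, the augmentation ideal $I_H$ of $\Z H$ has zero right annihilator and is therefore essential; Goldie's theorem then produces a regular element $\lambda_0 \in \Lambda \cap I_H$. In $R$, $\lambda_0$ becomes a unit lying inside the augmentation ideal $I_G$ of $\Z G$, forcing $I_G R = R$ and hence $\Z \otimes_{\Z G} R = R/I_G R = 0$.

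For (a), the freeness of $\Z G$ as right (and left) $\Z H$-module via transversals shows that every $\lambda \in \Lambda$ is regular in $\Z G$, and a standard common-right-multiple argument in the Ore ring $\Z H$ reduces the right Ore condition in $\Z G$ to single pairs $(g,\lambda)$. Fix such a pair, set $K := H \cap g^{-1}Hg$ and $L := gKg^{-1}$, both of finite index in $H$ by commensurability, and let $L_0 := \bigcap_{h \in H} hLh^{-1}$ be the $H$-normal core of $L$. Then $L_0 \normal H$ has finite index and is contained in $L$, so $\Z H$ is a finite crossed-product of $\Z L_0$ by $H/L_0$, and a standard crossed-product argument shows that any element regular in $\Z L_0$ remains regular in $\Z H$, and hence in $\Z G$. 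The plan is to find $\mu \in \lambda\Z H \cap \Z L_0$ that is regular in $\Z L_0$: writing $\mu = \lambda r$ with $r \in \Z H$ and setting $\lambda' := g^{-1}\mu g \in \Z(g^{-1}L_0 g) \subseteq \Z K$, the conjugation automorphism of $\Z G$ preserves regularity, so $\lambda' \in \Lambda$, and the Ore identity $g\lambda' = \mu g = \lambda(rg)$ follows with $r' := rg \in \Z G$.

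To locate $\mu$, decompose $\Z H = \bigoplus_j h_j \Z L_0$ as right $\Z L_0$-module and define $\phi : \Z H \to \bigoplus_{j \ne 1} h_j \Z L_0$ to be the right $\Z L_0$-linear map sending $\alpha$ to the projection of $\lambda\alpha$ off of the $\Z L_0$-summand. Tensoring with the flat extension $Q(\Z L_0)$ turns left multiplication by $\lambda$ into an injective endomorphism of a free module of rank $[H:L_0]$ over the simple Artinian ring $Q(\Z L_0)$, which is then an automorphism by finite length; composition with the surjective projection shows $\phi \otimes Q(\Z L_0)$ is surjective with kernel of $Q(\Z L_0)$-rank one. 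Therefore $\ker\phi$ has rank one as a right $\Z L_0$-module and $\lambda\ker\phi \subseteq \Z L_0$ is a rank-one, hence essential, right ideal of the prime Goldie ring $\Z L_0$, which contains the desired regular $\mu$ by Goldie's theorem. The main obstacle is this rank-counting and essentiality step; the normal-core reduction to $L_0$ is what allows a clean descent of regularity from $\Z L_0$ up through $\Z H$ to $\Z G$ and the identification of $g^{-1}L_0 g$ as a subgroup of $K$, both of which would fail for $L$ itself.
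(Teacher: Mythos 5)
Your argument is correct, and although it shares the paper's overall skeleton --- use commensurability to pass to a normal subgroup of finite index in $H$, produce a right multiple of $\lambda$ lying in the group ring of that subgroup and regular there, and conjugate it back into $\Z H$ --- the central technical device is genuinely different. The paper treats a general element $r=\sum_i g_ir_i$ all at once, choosing a single normal finite-index subgroup $K\subseteq\bigcap_i g_iHg_i^{-1}$, and obtains the required multiple $\nu=\lambda t\in\Z K$ from the identity $\Z H\Lambda_0^{-1}=\Z H\Lambda^{-1}$ (where $\Lambda_0:=\Lambda\cap\Z K$), which holds because $\Z H\Lambda_0^{-1}$ is a finite module over the Artinian ring $\Z K\Lambda_0^{-1}$ and regular elements of Artinian rings are units; it then applies the Ore condition inside $\Z H$ to the pairs $(r_i,\,g_i^{-1}\nu g_i)$ and finishes with a common-denominator step. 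You instead reduce to single pairs $(g,\lambda)$, pass to the normal core $L_0$ of $H\cap gHg^{-1}$, and show directly that $\lambda\Z H\cap\Z L_0$ is an essential right ideal of $\Z L_0$ by a length count over the quotient ring $Q(\Z L_0)$, extracting the regular multiple $\mu$ from Goldie's theorem; this buys the clean one-line Ore identity $g\lambda'=\lambda(rg)$ at the cost of the rank computation and of having to verify the reduction to group-element pairs (which does go through, by first solving $(g_i,\lambda)$, then $(r_i,\mu_i)$ in $\Z H$, then taking a common right multiple). Both routes rest on the same unproved-but-standard facts that a finite-index subgroup inherits the prime Goldie property and that regularity descends and ascends along the coset decomposition over a normal finite-index subgroup. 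For the final assertion your argument is the same in substance as the paper's: the paper observes that $\mathfrak h\cdot\Z H\Lambda^{-1}$ is a non-zero two-sided ideal of the simple Artinian ring $\Z H\Lambda^{-1}$ and hence the whole ring, while you extract a regular element of the augmentation ideal and invert it in $R$; note that your annihilator step can be shortcut, since in a prime ring every non-zero two-sided ideal is automatically essential as a right ideal.
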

\begin{proof} 
Before starting, recall that in a prime Goldie ring, the set of non-zero divisors is a right Ore set and the resulting Ore localization is a simple Artinian ring.
We first prove that $\Lambda$ is a right Ore set in $\Z G$.
If $H$ is normal in $G$ then this is an easy and well known consequence of $\Lambda$ being a right Ore set in $\Z H$. Now consider the general case.
Let $r$ be an element of $\Z G$ and let $\lambda$ be an element of $\Lambda$. We need to find $\mu\in\Lambda$ and $s\in\Z G$ such that $r\mu=\lambda s$. Choose any way 
$$r=g_1r_1+\dots+g_mr_m$$
of expressing $r$ as a finite sum in which each $r_i$ belongs to $\Z H$ and $g_i\in G$. Since all the subgroups $g_iHg_i^{-1}$ are commensurable with $H$ we can choose a normal subgroup $K$ of finite index in $H$ such that 
$$K\subseteq\bigcap_{i=1}^mg_iHg_i^{-1}.$$
The group algebra $\Z K$ inherits the property of being a prime Goldie ring and the set of non-zero divisors in $\Z K$ is $$\Lambda_0:=\Lambda\cap\Z K.$$ By our initial remarks on the case of a normal subgroup, $\Lambda_0$ is a right Ore set in $\Z H$. Moreover, $\Z H\Lambda_0^{-1}$ is finitely generated over the Artinian ring $\Z K\Lambda_0^{-1}$. It follows {\em a fortiori} that
$\Z H\Lambda_0^{-1}$ is Artinian as a ring and since every non-zero divisor in an Artinian ring is a unit, we conclude that
$$\Z H\Lambda_0^{-1}=\Z H\Lambda^{-1}.$$
Hence, there exists a $t$ in $\Z H$ such that $\nu:=\lambda t\in\Lambda_0$: to see this, simply choose an expression $t\nu^{-1}$ for $\lambda^{-1}\in\Z H\Lambda^{-1}$ in the spirit of the localization
$\Z H\Lambda_0^{-1}$. For each $i$, we have $g_i^{-1}Kg_i\subseteq H$ and hence 
$g_i^{-1}\nu g_i\in\Z H$. 
It is straightforward to check that each $g_i\nu g_i^{-1}$ is a non-zero divisor in $\Z H$.
Applying the Ore condition to the pair $r_i,g_i^{-1}\nu g_i$ we find 
$s_i\in\Z H$ and $\mu_i\in\Lambda$ such that $$r_i\mu_i=g_i^{-1}\nu g_is_i.$$ It is routine that a finite list of elements in an Ore localization can be placed over a common denominator and it is therefore possible to make these choices so that the $\mu_i$ are all equal: we do this and write $\mu$ for the common element.
Thus
$$r_i\mu=g_i^{-1}\nu g_is_i,$$
and 
$$r\mu=\sum_ig_ir_i\mu=\sum_ig_ig_i^{-1}\nu g_is_i=\nu\left(\sum_ig_is_i\right)
=\lambda t\left(\sum_ig_is_i\right).$$
This establishes the Ore condition as required with $s=t\left(\sum_ig_is_i\right)$.

Finally, assume $H$ is non-trivial and let $\mathfrak h$ denote the augmentation ideal in $\Z H$. Then $\mathfrak h$ is non-zero and $\mathfrak h.\Z H\Lambda^{-1}$ is a non-zero two-sided ideal in the simple Artinian ring $\Z H\Lambda^{-1}$. Hence $\mathfrak h.\Z H\Lambda^{-1}=
\Z H\Lambda^{-1}$ and $\Z\otimes_{\Z H}\Z H\Lambda^{-1}=0$. It follows that $\Z\times_{\Z G}R=0$ so $R$ does indeed satisfy the hypotheses of Lemma \ref{flat extension1}.
\end{proof}

The condition $Comm_G(H)=G$ has been studied by the author in cohomological contexts, see \cite{kropholler-BS} and \cite{kropholler2006}. The second paper \cite{kropholler2006} addresses a more general situation in which $H$ is replaced by a set $\mathcal S$ of subgroups which is closed under conjugation and finite intersections: it is then shown one can define a cohomological functor $H^*(G/\mathcal S,\blah)$ on $\Z G$-modules and that spectral sequence arguments can be used to carry out certain calculations. Here we show, for the reader familiar with \cite{kropholler2006} how these arguments may be used to investigate when the new functors $H^*(G/\mathcal S,\blah)$ are finitary. 

\begin{lemma}
Let $\mathcal S$ and $G$ be as above. 
\begin{enumerate}
\item
The functor $H^0(G/\mathcal S,{\phantom M})$ is $0$-finitary.
\item
 If $G$ is finitely generated then the functor 
$H^1(G/\mathcal S,{\phantom M})$ is $0$-finitary.
\item More generally if $n$ is an integer such that $G$ has type $\fp_n$ and all members of $\mathcal S$ have type $\fp_{n-1}$ then the functors $H^i(G/\mathcal S,{\phantom M})$ are $0$-finitary for all $i\le n$.
\end{enumerate}
\end{lemma}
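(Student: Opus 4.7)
Parts (i) and (ii) are the cases $n=0$ and $n=1$ of (iii): in (ii) the condition that members of $\mathcal S$ have type $\fp_0$ is vacuous, and finite generation of $G$ is exactly type $\fp_1$. So I would concentrate on proving (iii). The plan is to invoke the spectral sequence from \cite{kropholler2006} relating $H^*(G/\mathcal S,\blah)$ to the ordinary cohomology of $G$ and of the members of $\mathcal S$. The expected shape is a long exact sequence (or low-degree exact sequence associated to that spectral sequence) of the form
$$\cdots\to\bigoplus_{[H]}H^{i-1}(H,M)\to H^i(G/\mathcal S,M)\to H^i(G,M)\to\bigoplus_{[H]}H^i(H,M)\to\cdots,$$
where $[H]$ runs through a set of representatives for the $G$-conjugacy classes in $\mathcal S$.

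The formal property I would exploit is that the class of $0$-finitary functors is closed under kernels, cokernels, extensions and direct sums; this is immediate from exactness of filtered colimits in $\mathrm{Ab}$ and their commutation with coproducts. Granting the displayed exact sequence, I would induct on $i\le n$. Lemma \ref{classical} provides that $H^i(G,\blah)$ is $0$-finitary for $i\le n$ (from $G$ being of type $\fp_n$), and that $H^{i-1}(H,\blah)$ is $0$-finitary for $i\le n$ (from each $H\in\mathcal S$ being of type $\fp_{n-1}$). A diagram chase with a vanishing filtered colimit system $(M_\lambda)$ then forces $\colim H^i(G/\mathcal S,M_\lambda)=0$ for all $i\le n$, as required. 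For (i) one can moreover argue directly: $H^0(G,\blah)=(\blah)^G$ is always $0$-finitary because $M^G\hookrightarrow M$ is natural and filtered colimits preserve injections.

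The main obstacle I expect lies in the distinction between direct sums and direct products in the spectral sequence. The family $\mathcal S$ is closed under conjugation and may well be infinite, and infinite products do not in general commute with filtered colimits. I would need to verify from the construction in \cite{kropholler2006} that the natural indexing at each cohomological dimension is by $G$-orbits and yields a coproduct rather than a product, or else that the $\fp_n$ hypothesis on $G$ limits the orbit types contributing to the relevant $E_2^{p,q}$ with $p+q\le n$ to a finite set. A related subtlety is that cohomology of individual $H\in\mathcal S$ typically enters the sequence via modules coinduced from $\Z H$ to $\Z G$; Shapiro's lemma reconciles these coinduced terms with the hypothesis on members of $\mathcal S$, but the $0$-finitary property must be tracked through the coinduction, which again reduces to the sum-versus-product issue above.
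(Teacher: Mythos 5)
Your reduction of (i) and (ii) to the case (iii), and your observation that $0$-finitary functors are closed under kernels, cokernels, extensions and coproducts, both match the spirit of the paper. The gap is in the shape you assign to the machinery of \cite{kropholler2006}. There is no long exact sequence
$\cdots\to H^{i-1}(\mathcal S,M)\to H^i(G/\mathcal S,M)\to H^i(G,M)\to\cdots$
in general degrees: what exists is a first-quadrant spectral sequence with
$E_2^{p,q}=H^p(G/\mathcal S,H^q(\mathcal S,M))$ converging to $H^{p+q}(G,M)$. The unknown functors $H^p(G/\mathcal S,\blah)$ therefore occur in \emph{every} column of the $E_2$-page, composed with the relative cohomology $H^q(\mathcal S,\blah)$ (which is a single derived functor landing back in the category of $G/\mathcal S$-modules, not a coproduct over conjugacy classes --- so the sum-versus-product and Shapiro issues you flag, while reasonable to worry about, are not where the difficulty lies). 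Since $H^i(G/\mathcal S,\blah)$ is not sandwiched between two functors already known to be $0$-finitary, no diagram chase in a single fixed degree can close the argument.

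What the paper does, and what your outline omits, is an induction on $n$ that is genuinely needed precisely because of this self-referential $E_2$-page. Assuming inductively that $H^p(G/\mathcal S,\blah)$ is $0$-finitary for $p<n$, and using that the members of $\mathcal S$ have type $\fp_{n-1}$ so that $(H^q(\mathcal S,M_\lambda))$ is again a vanishing filtered colimit system in the relevant range, one kills (after passing to colimits) all the $E_2^{p,q}$ with $p<n$ that could interfere in total degree $n$; the surviving corner term then gives an isomorphism
$$\colim H^n(G/\mathcal S,H^0(\mathcal S,M_\lambda))\iso\colim H^n(G,M_\lambda),$$
and the right-hand side vanishes because $G$ has type $\fp_n$ (Lemma \ref{classical}). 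Finally, the systems $(M_\lambda)$ are taken in the subcategory of $G/\mathcal S$-modules, so $H^0(\mathcal S,M_\lambda)=M_\lambda$ and the conclusion $\colim H^n(G/\mathcal S,M_\lambda)=0$ follows. Your proposal would need to be rebuilt around this induction and the two-variable structure of the spectral sequence rather than a long exact sequence.
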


\begin{proof} 
We prove part (iii) by induction on $n$. The case $n=0$ is easy: this is part (i) of the statement and the finitary property is inherited from ordinary cohomology. The case $n=1$ is part (ii) of the statement and there is no need to treat this separately. Fix $n\ge1$ and assume inductively that the result is established for numbers $<n$. In particular we may assume that $H^i(G/\mathcal S,{\phantom M})$ is $0$-finitary when $i<n$.

Let $(M_\lambda)$ be a vanishing filtered colimit system in the category $\Mod\Z G/\mathcal S$. Taking colimits of the spectral sequences of \cite{kropholler2006} we obtain the spectral sequence
$$E_2^{p,q}=\colim H^p(G/\mathcal S,H^q(\mathcal S,M_\lambda))\implies \colim H^{p+q}(G,M_\lambda).$$ 
Now consider the cases when $p+q\le n$, $p\ge0$, $q\ge0$. When $p$ is less than $n$, the inductive and originally stated finitary assumptions imply that $E_2^{p,q}=0$ and so we have a block of zeroes on the $E_2$-page of the spectral sequence in the range $0\le p\le n-1$ and $0\le q\le n$. Therefore only the term $E_2^{n,0}=E_\infty^{n,0}$ and the cohomology $\colim H^{n}(G,M_\lambda)$ is isomorphic to $$\colim H^n(G/\mathcal S,H^0(\mathcal S,M_\lambda)).$$ 
But $\colim H^{n}(G,M_\lambda)$ is zero by assumption and so
 $$\colim H^n(G/\mathcal S,H^0(\mathcal S,M_\lambda))=0.$$ 
 The $M_\lambda$ were chosen in the subcategory so this simplifies to
 $$\colim H^n(G/\mathcal S,M_\lambda)=0.$$ 
 This vanishing applies to any choice of system $(M_\lambda)$ and thus $H^n(G/\mathcal S,{\phantom M})$ is $0$-finitary as required.
\end{proof}

\bibliographystyle{amsplain}
%\bibliography{Continuity}
%\end{document}

\section{Hamilton's Results}

\subsection{When is group cohomology finitary?}

Hamilton \cite{hamilton2011} uses the results of this paper to characterize the locally (polycyclic-by-finite) groups cohomology almost everywhere finitary: these are shown to be precisely the locally (polycyclic-by-finite) groups with finite virtual cohomological dimension and in which the normalizer of every non-trivial finite subgroup is of type $\fpinfty$. In particular this class of groups is subgroup closed. Note that the class of locally (polycyclic-by-finite) groups includes the class of abelian-by-finite groups and already, within the class of abelian-by-finite groups there are many interesting examples. The abelian group $\Q^{+}\times C_{2}$ (a direct product of the additive group of rational numbers by the cyclic group of order $2$ has almost all its cohomology functors infinitary. By contrast, the non-abelian extension of $\Q^{+}$ by $C_{2}$ is almost everywhere finitary even though it is infinitely generated. Hamilton finds that in general, the locally polycyclic-by-finite groups which have almost all cohomology functors finitary form a subgroup closed class. 

In view of our Theorem 2.1, Hamilton naturally focusses on groups with finite virtual cohomological dimension. He shows, for example, that if $G$ is a group with finite vcd and then $G$ has cohomology almost everywhere finitary over the field $\F_{p}$ of $p$ elements if and only if $G$ has finitely many conjugacy classes of elementary abelian $p$-subgroups and the centralizer of each non-trivial elementary abelian $p$-subgroup is of type $\fpinfty$ over $\F_{p}$.

Clearly, a natural question is whether one can generalize Hamilton's results from locally(polycyclic-by-finite) groups to other classes of soluble groups. One of the main reasons why this appears hard is that there is no clear classification of which soluble groups have type $\fpinfty$ over a given finite field. There are satisfactory theories of soluble groups of type $\fpinfty$ over $\Q$ and over $\Z$ but these have yet to be generalized to the case of finite fields.
Hamilton's proofs make use of the deep results \cite{henn} of Henn and in particular this leads to an answer to a question raised by Leary and Nucinkis \cite{learnnucinkis}, namely he shows that
 if $G$ is a group of type VFP over $\F_{p}$, and $P$ is a $p$-subgroup of $G$, then the centralizer $C_{G}(P)$ of $P$ is also of type VFP over $\F_{p}$. 

The most relevant questions raised by this research are as follows:

\begin{question}
Let $G$ be a soluble group and let $p$ be a prime. What are the homological and cohomological dimensions of $G$ over $\F_{p}$? Is there a simple criterion for $G$ to have type $\fpinfty$ over $\F_{p}$.
\end{question}

One may expect soluble groups to behave similarly over $\F_{p}$ as they do over $\Q$ with the obvious elementary caveat that one has to take care of $p$-torsion. However, there is no detailed account in the literature: Bieri's notes confine analysis to the characteristic zero case subsequent authors have studied this case alone in depth.

Finally, in this paper, Hamilton proves a more general result for groups that admit a finite dimensional classifying space for proper actions. He concludes that if $G$ is such a group and if there are just finitely many conjugacy classes of non-trivial finite subgroups for each of which the corresponding centralizers have cohomology almost everywhere finitary, then $G$ itself has cohomology almost everywhere finitary.

Hamilton uses results \cite{leary} of Leary to show that the converse of this result fails. Leary has constructed groups of type $\fpinfty$ which are of type VFP but which have infinitely many conjugacy classes of finite subgroups.

\subsection{Eilenberg--Mac Lane Spaces} In a second paper \cite{hamilton2009}, Hamilton studies the question of whether the property \emph{almost everywhere finitary} impacts on the Eilenberg--Mac Lane space of a group. Hamilton's main result (\cite{hamilton2009}, Theorem A) includes the statement that a group $G$ in the class $\lhf$ has cohomology almost everywhere finitary if and only if $G\times\Z$ (the direct product of $G$ with an infinite cyclic group) admits an Eilenberg--Mac Lane space with finitely many $n$-cells for all sufficiently large $n$. There are two natural questions arising from this research.

\begin{question}
Does Hamilton's (\cite{hamilton2009}, Theorem A) hold for arbitrary groups, outwith the class $\lhf$?
\end{question}

\begin{question}
Can Hamilton's (\cite{hamilton2009}, Theorem A) be proved with out the stabilization device of replacing $G$ by $G\times\Z$.
\end{question}

It is natural so speculate that both of these questions have a positive answer but they remain open.

\subsection{Group actions on spheres} In a third paper \cite{hamilton2008}, Hamilton builds on \cite{hamilton2011} by showing that in a locally(polycyclic-by-finite) group with cohomology almost everywhere finitary, every finite subgroup admits a free action on some sphere. This perhaps surprising fact is proved purely algebraically by showing that the same algebraic restrictions apply to the finite subgroups in Hamilton's context as apply in the theory of group actions on spheres, namely that subgroups of order a product of two (not necessarily distinct) primes must be cyclic. So the natural questions that arise are:

\begin{question}
Is there a geometric explanation for the connection between Hamilton's (\cite{hamilton2008}, Theorem 1.3) which explains the link with group actions on spheres? Are there similar results for a larger class of groups, for example, soluble groups of finite rank.
\end{question}

\bibliography{Continuity}
\bibliographystyle{plain}

\end{document}